\newcommand{\Q}{\Bbb Q}
\newcommand{\Z}{\Bbb Z}
\newcommand{\ga}{{\rm Gal}}
\newcommand{\f}[1]{{\frak #1}}
\newcommand{\ca}[1]{{\mathcal #1}}
\newcommand{\lr}[1]{\langle #1 \rangle}
\newcommand{\fulltoday}{\number\day\space \ifcase\month\or
    January\or February\or March\or April\or May\or June\or
    July\or August\or September\or October\or November\or December\fi
    \space\number\year}
\newcommand{\cgr}[1]{\Z_p\llbracket #1 \rrbracket}
\theoremstyle{plain}
\newtheorem{thm}{\indent\bf Theorem}
\newtheorem{lem}{\indent\bf Lemma}
\newtheorem{cor}{\indent\bf Corollary}
\theoremstyle{definition}
\newtheorem*{rem}{\indent\bf Remark}
\begin{document}
\title{
On capitulations and pseudo-null submodules in certain $\Z_p^d$-extensions
}

\author{
Satoshi FUJII\thanks{
Faculty of Education, Shimane University, 
1060 Nishikawatsucho, Matsue, Shimane, 690--8504, Japan. 
e-mail : {\tt fujiisatoshi@edu.shimane-u.ac.jp}
}
}
\date{
}
\maketitle

\begin{abstract}
Let $p$ be a prime number. 
By a result of Ozaki, 
the capitulations of ideals in $\Z_p$-extensions and the finite submodules of Iwasawa modules 
are closely related. 
In this article, 
we discuss this relationship in $\Z_p^d$-extensions. 
\end{abstract}
\footnote[0]{
2000 \textit{Mathematics Subject Classification}. 
Primary : 11R23. 
}

\section{Introduction}

Let $p$ be a fixed prime number and $k/\Q$ a fixed finite extension, 
where denote by $\Q$ the field of rational numbers. 
For a number field $F$, 
let $A_F$ be the $p$-part of the ideal class group of $F$. 
Let $\Z_p$ be the ring of $p$-adic integers. 
Let $k_{\infty}/k$ be a $\Z_p$-extension and $k_n$ its $n$-th layer for each non-negative integer $n$, 
namely, 
$k_n$ is the unique intermediate field of $k_{\infty}/k$ such that $[k_n:k]=p^n$. 
Let $X_{k_{\infty}}=\varprojlim_n A_{k_n}$, 
the projective limit is taken with respect to norm maps. 
The module $X_{k_{\infty}}$ is also defined to be the Galois group $\ga(L_{k_{\infty}}/k_{\infty})$ of the 
maximal unramified abelian pro-$p$ extension $L_{k_{\infty}}/k_{\infty}$. 
We then have natural projection maps $X_{k_{\infty}}\to A_{k_n}$ for all $n\geq 0$. 
Let $A_{k_{\infty}}=\varinjlim_n A_{k_n}$, 
the inductive limit is taken with respect to lifting maps. 
We then have lifting maps $A_{k_n}\to A_{k_{\infty}}$ for all $n\geq 0$. 
It is well known that $X_{k_{\infty}}$ is a module over the completed group ring 
$\cgr{\ga(k_{\infty}/k)}$. 
Let $X_{k_{\infty}}^0$ be the maximal finite submodule of $X_{k_{\infty}}$. 
Then Ozaki obtained the following. 

\begin{thm}[Ozaki \cite{Ozaki1995}]\label{Ozaki}
Suppose that $k_{\infty}/k$ is totally ramified at all ramified primes. 
Then we have
$
{\rm Ker}(A_{k_n}\to A_{k_{\infty}})
=
{\rm Im}(X_{k_{\infty}}^0\to A_{k_n})
$
for all $n\geq 0$. 
In particular, 
$X_{k_{\infty}}^0\neq 0$ if and only if ${\rm Ker}(A_{k_n}\to A_{k_{\infty}})\neq 0$ for some $n\geq 0$. 
\end{thm}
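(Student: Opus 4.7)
The plan is to translate both sides of the claimed equality into $\Lambda$-module language and compare. Set $\Gamma = \ga(\ki/k)$, fix a topological generator $\gamma$, put $T = \gamma - 1$, $\Lambda = \Z_p[[T]]$, $\omega_n = (1+T)^{p^n} - 1$, and $\nu_{m,n} = \omega_m/\omega_n$ for $m \ge n$, and abbreviate $X := X_{\ki}$. Under the total-ramification hypothesis, Iwasawa's classical description of $A_{k_n}$ identifies the projection $\pi_n \colon X \twoheadrightarrow A_{k_n}$ (from the projective-limit structure) with the quotient map $X \to X/\omega_n X$ when only one prime ramifies in $\ki/k$; when several primes ramify, the kernel is enlarged by an explicit ``inertia-difference'' submodule, but the argument adapts with only bookkeeping modifications. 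I would present the single-prime case.

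Combining this description with the norm compatibility $\pi_n = N_{k_m/k_n}\circ\pi_m$ and the identity $\iota_{k_m/k_n}\circ N_{k_m/k_n} = \sum_{\sigma \in \ga(k_m/k_n)}\sigma = \nu_{m,n}$ on $A_{k_m}$ yields the fundamental formula
\[
\iota_{k_m/k_n}\bigl(\pi_n(\alpha)\bigr) = \nu_{m,n}\cdot\pi_m(\alpha) \quad \text{in } A_{k_m},\qquad \alpha\in X.
\]
Under $A_{k_m}\cong X/\omega_m X$, lifting is therefore multiplication by $\nu_{m,n}$.

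For $\mathrm{Im}(X^0\to A_{k_n})\subseteq\mathrm{Ker}(A_{k_n}\to A_{\ki})$, take $\alpha\in X^0$. Being in a finite $\Lambda$-module, $\alpha$ is annihilated by some $p^e$, and continuity of the $\Gamma$-action on the finite set $X^0$ forces $\omega_N\alpha = 0$ for some $N$. For $m \ge \max(n,N) + e$ one checks $\nu_{m,n}\alpha = 0$: factoring $\nu_{m,n} = \nu_{N,n}\cdot\nu_{m,N}$ and using $\gamma^{p^N}\alpha = \alpha$ makes $\nu_{m,N}$ act as $p^{m-N}$, which annihilates $\alpha$. Then $\iota_{k_m/k_n}(\pi_n(\alpha)) = \pi_m(\nu_{m,n}\alpha) = 0$, so $\pi_n(\alpha)$ capitulates in $A_{k_m}$, and hence in $A_{\ki}$.

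For the reverse inclusion, let $c\in A_{k_n}$ capitulate in some $A_{k_m}$ and pick $\tilde c\in X$ with $\pi_n(\tilde c) = c$. Capitulation translates into $\nu_{m,n}\tilde c\in \omega_m X = \nu_{m,n}\omega_n X$, so choosing $y\in X$ with $\nu_{m,n}\tilde c = \nu_{m,n}\omega_n y$ and setting $\alpha := \tilde c - \omega_n y$ gives $\pi_n(\alpha) = c$ together with $\nu_{m,n}\alpha = 0$. The main obstacle is to upgrade this $\alpha$ so that it actually lies in $X^0$ rather than merely in $X[\nu_{m,n}]$, which in principle can be strictly larger: a single annihilation relation is not sufficient. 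The idea is to use that $c$ capitulates at every level $m'\ge m$, giving a coherent family of modifications $(y_{m'})$ and hence an improved representative $\alpha$ annihilated by an increasing sequence of elements; combined with the defining property of $X^0$ as the \emph{maximal} finite submodule of $X$, so that $X/X^0$ contains no nonzero finite submodule, this should force the $\Lambda$-submodule generated by the image of $\alpha$ in $X/(X^0 + \omega_n X)$ to be finite and hence zero, placing $\alpha$ into $X^0 + \omega_n X$. Carrying out this last step --- where the totally-ramified hypothesis is used beyond the mere identification $A_{k_n}\cong X/\omega_n X$, through the compatibility of the $y_{m'}$ in the tower --- is the delicate part I expect to be the main obstacle.
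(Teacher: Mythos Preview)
The paper does not prove this theorem; it is quoted from Ozaki \cite{Ozaki1995} and used as a tool, so there is no proof in the paper to compare your attempt against.

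As for your attempt itself: the forward inclusion is fine, and your setup for the reverse inclusion is correct, but you stop short at exactly the point where one more observation finishes the argument cleanly---no limiting process over the $y_{m'}$ is needed. Since $A_{k_n}\cong X/\omega_n X$ is finite for every $n$, the characteristic polynomial $f=\prod_i f_i^{a_i}$ of $X$ is coprime to every $\omega_n$, hence to every $\nu_{m,n}$. The quotient $Y:=X/X^0$ has no nonzero finite $\Lambda$-submodule, so the pseudo-isomorphism $Y\to E=\bigoplus_i\Lambda/(f_i^{a_i})$ is injective; and because each $f_i$ is coprime to $\nu_{m,n}$, one has $E[\nu_{m,n}]=0$, hence $Y[\nu_{m,n}]=0$. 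Thus your element $\beta=\tilde c-\omega_n y$ with $\nu_{m,n}\beta=0$ already lies in $X^0$, and $c=\pi_n(\beta)\in\mathrm{Im}(X^0\to A_{k_n})$. So the ``main obstacle'' you anticipate is not an obstacle at all once you invoke the coprimality of $\mathrm{char}(X)$ with the $\omega_n$'s; your proposed workaround via a coherent family $(y_{m'})$ is unnecessary. The multi-prime case does require replacing $\omega_n X$ by the appropriate Iwasawa submodule, but the same coprimality argument applies.
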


For the cyclotomic $\Z_p$-extensions $k_{\infty}/k$ of totally real fields $k$, 
the non-triviality of $X_{k_{\infty}}^0$ is studied as a weak form of Greenberg's conjecture 
(for Greenberg's conjecture see \cite{Greenberg1976}, 
and for a weak form of Greenberg's conjecture see \cite{NQD2006}, \cite{NQD2017}). 
In this article, 
we discuss the relationship between kernels of lifting maps and 
pseudo-null submodules in $\Z_p^d$-extensions. 

For a positive integer $d$, 
an algebraic extension $K/k$ is called a $\Z_p^d$-extension 
if $K/k$ is a Galois extension and $\ga(K/k)\simeq \Z_p^d$ as topological groups. 
The composite field $\tilde{k}$ of all $\Z_p$-extensions of $k$ is a $\Z_p^d$-extension for 
some $d>0$. 
Let $K/k$ be a $\Z_p^d$-extension. 
Let $\displaystyle{X_{K}=\varprojlim_{k\subseteq k'\subseteq K,[k':k]<\infty}A_{k'}}$, 
the projective limit is taken with respect to norm maps. 
The module $X_K$ is also defined to be the Galois group 
of the maximal unramified abelian pro-$p$ extension $L_{K}/K$. 
Then the completed group ring $\Z_p\llbracket\ga(K/k) \rrbracket$ 
acts on $X_{K}$. 
Iwasawa and Greenberg showed that $X_{K}$ is a finitely generated torsion 
$\Z_p\llbracket\ga(K/k) \rrbracket$-module. 
Let $X_{K}^0$ be the maximal pseudo-null $\cgr{\ga(K/k)}$-submodule of 
$X_{K}$, 
here, 
a $\Z_p\llbracket\ga(K/k) \rrbracket$-module is called pseudo-null if the annihilator ideal 
is not contained in any height $1$ prime ideals. 
When $K=\tilde{k}$, 
the non-triviality of $X_{\tilde{k}}^0$ is studied as a weak form of Greenberg's generalized conjecture 
(for Greenberg's generalized conjecture see \cite{Greenberg2001}, 
and for a weak form of Greenberg's generalized conjecture see \cite{Wingberg}, 
\cite{NQD2017} and \cite{Murakami2023}).
Let 
$\displaystyle{A_{K}=\varinjlim_{k\subseteq k'\subseteq K,[k':k]<\infty}A_{k'}}$, 
the inductive limit is taken with respect to lifting maps. 
Let $A_{k'}\to A_{K}$ be the lifting map. 
In this article, 
we mainly discuss by putting the following assumption on $\Z_p^d$-extensions:  
\\

\vspace{-6pt}
{\bf Condition A.}
The prime number $p$ does not split in $k/\Q$ and $K/k$ is totally ramified at the unique 
prime of $k$ lying above $p$.
\\

\vspace{-6pt}
\noindent 
The results of this article are as follows. 

\begin{thm}\label{main1}
Let $K/k$ be a $\Z_p^d$-extension. 
Suppose that the condition A holds and that $A_k\simeq \Z/p^c$ for some $c\in \Z_{>0}$. 
If there is an intermediate field $k\subseteq k' \subseteq K$ with $[k':k]<\infty$ such that 
${\rm Ker}(A_{k'}\to A_K)\neq 0$, 
then $X_K^0\neq 0$. 
\end{thm}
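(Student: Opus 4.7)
The plan is to promote Ozaki's theorem (Theorem~\ref{Ozaki}) to the $\Z_p^d$-setting by exploiting the cyclicity of $A_k$ together with the rigidity afforded by Condition~A.

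First, under Condition~A the natural map $X_K \to A_k$ (coming from taking $\ga(K/k)$-coinvariants of $X_K$) is surjective --- total ramification at the unique prime above $p$ kills the obstruction. Since $A_k \simeq \Z/p^c$ is cyclic, topological Nakayama applied to the complete local ring $\Lambda := \cgr{\ga(K/k)}$ then forces $X_K$ to be cyclic as a $\Lambda$-module. Write $X_K \cong \Lambda/J$ for some ideal $J \suq \Lambda$.

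Next, I would translate the capitulation hypothesis into commutative algebra. For each finite $k \suq k' \suq K$, Condition~A should give $A_{k'} \cong \Lambda/(J + I_{k'})$, where $I_{k'}$ is the augmentation ideal of $\ga(K/k')$ inside $\Lambda$; and the lifting map $A_{k'} \to A_{k''}$ (for $k' \suq k'' \suq K$ finite) is induced by multiplication by the trace element $\nu_{k''/k'} = \sum_{\sigma} \tilde{\sigma}$, with $\sigma$ ranging over coset representatives of $\ga(K/k')/\ga(K/k'')$. Thus the capitulation hypothesis becomes: there exist $\lambda \in \Lambda \setminus (J + I_{k'})$ and a finite $k''$ with $k' \suq k'' \suq K$ such that $\nu_{k''/k'} \lambda \in J + I_{k''}$.

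Third, I would use this data to manufacture a nonzero pseudo-null element of $X_K$. The natural candidate is $\xi := \nu_{k''/k'} \lambda + J \in X_K$. By the hypothesis, $\xi \in I_{k''} X_K$, so $\xi$ is ``small'' with respect to the $I_{k''}$-filtration. Combining this with the standard identity $I_{\ga(K/k')} \cdot \nu_{k''/k'} \suq I_{\ga(K/k'')}$ for trace elements shows that $\xi$ is annihilated (up to elements of $J$) by a large ideal, whose image in $\Lambda/J$ should have height at least $2$; this provides a nontrivial pseudo-null submodule of $X_K$, i.e., $X_K^0 \neq 0$. Non-triviality of $\xi$ is to be extracted from $\lambda \notin J + I_{k'}$ by a Nakayama argument in the appropriate quotient.

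The main obstacle is the last step: rigorously producing an annihilator of genuine height at least $2$. The issue is that the relations forced by capitulation live modulo various $I_H$, and pushing them back to a height-$2$ statement about $J$ itself is delicate, since an arbitrary finite $k''$ need not sit inside any $\Z_p$-subextension of $K/k$, blocking a naive reduction to the $\Z_p$-case. A plausible fallback is to use the cyclicity of $X_K$ together with a norm manipulation (reducing, via $A_k \simeq \Z/p^c$, to capitulation emanating from $A_k$ itself) to select a $\Z_p$-subextension $k_\infty/k$ of $K$ in which the capitulation is visible, apply Theorem~\ref{Ozaki} to conclude $X_{k_\infty}^0 \neq 0$, and then transfer this back to $X_K^0 \neq 0$ via the natural surjection $X_K \twoheadrightarrow X_{k_\infty}$, where once again cyclicity is what ensures pseudo-nullity survives the transfer.
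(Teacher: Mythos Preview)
Your fallback is the right overall shape and is essentially what the paper does, but two concrete pieces are missing, and the first attempt (directly exhibiting a pseudo-null element $\xi$) does not seem salvageable as stated.

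First, the reduction to a $\Z_p$-extension. You try to push the capitulation down to $A_k$ and then find a $\Z_p$-extension $k_\infty/k$ inside $K$ witnessing it. Neither step is justified, and the second can genuinely fail: a given finite subextension $k''/k$ need not embed into any $\Z_p$-extension of $k$ inside $K$ (e.g.\ if $\ga(K/k'')=p\Z_p^d$ it contains no rank-$(d-1)$ direct summand of $\Z_p^d$). The paper avoids this entirely: starting from $\mathrm{Ker}(A_{k'}\to A_K)\neq 0$, it finds a finite \emph{cyclic} step $F'/F$ with $k'\subseteq F$ and $\mathrm{Ker}(A_F\to A_{F'})\neq 0$, and then uses that $K/F$ is itself a $\Z_p^d$-extension to embed $F'$ in a $\Z_p$-extension $F_\infty/F$ inside $K$. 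Ozaki's theorem then gives $X_{F_\infty}^0\neq 0$. No norm manipulation or reduction to $A_k$ is needed.

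Second, and more seriously, the ``transfer'' you allude to is exactly where the work lies, and ``cyclicity ensures pseudo-nullity survives'' is not an argument. The paper runs the contrapositive: assume $X_K^0=0$. Cyclicity gives $X_K\simeq\Lambda_G/J$; the point is that $X_K^0=0$ forces $J$ to be \emph{principal} (write $J=(h_1,\dots,h_s)$, factor out the gcd $h$, and observe $h\Lambda_G/J\simeq\Lambda_G/(h_1/h,\dots)$ is pseudo-null, hence zero in $X_K$). Thus $0\to\Lambda_G\xrightarrow{h}\Lambda_G\to X_K\to 0$. Taking $\ga(K/F_\infty)$-coinvariants and using that $\Lambda_{G/H}\simeq\Lambda_\Gamma^{\oplus[F:k]}$ is $\Lambda_\Gamma$-free, together with the fact that $X_{F_\infty}$ is $\Lambda_\Gamma$-torsion (so the induced map is injective), yields $0\to\Lambda_\Gamma^{\oplus r}\to\Lambda_\Gamma^{\oplus r}\to X_{F_\infty}\to 0$. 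Wingberg's criterion (Lemma~\ref{lem0}) then forces $X_{F_\infty}^0=0$, contradicting the previous paragraph. This principal-ideal/free-resolution mechanism is the heart of the proof and is absent from your outline.
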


We must mention here that, 
by Iwasawa's result \cite{Iwasawa1956}, 
under the condition A, 
if $A_k=0$ then $X_K=0$. 

\begin{thm}\label{main2}
Let $K/k$ be a $\Z_p^d$-extension. 
Suppose that the condition A holds. 
If $X_K^0\neq 0$, 
then there is an intermediate field $k\subseteq k' \subseteq K$ with $[k':k]<\infty$ such that 
${\rm Ker}(A_{k'}\to A_K)\neq 0$. 
\end{thm}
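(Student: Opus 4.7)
The plan is to reduce Theorem~\ref{main2} to Ozaki's Theorem~\ref{Ozaki} by exhibiting a $\Z_p$-extension $k_\infty \subseteq K$ of $k$ for which $X_{k_\infty}^0 \neq 0$, then applying Ozaki's theorem to $k_\infty/k$ and transferring the capitulation to $A_K$.

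First I would observe that Ozaki's theorem applies to every intermediate $\Z_p$-extension $k_\infty \subseteq K$ of $k$: under Condition~A, the unique prime of $k$ above $p$ is totally ramified in $K/k$, hence in $k_\infty/k$, and this is the only possibly ramified prime in $k_\infty/k$. Under Condition~A, the linear-disjointness $L_{k_\infty}\cap K = k_\infty$ yields a natural surjection $X_K \twoheadrightarrow X_{k_\infty}$ identifying the target with $X_K/I_HX_K$, where $H=\ga(K/k_\infty)$ and $I_H\subseteq \cgr{\ga(K/k)}$ is the kernel of the quotient map $\cgr{\ga(K/k)}\to\cgr{\ga(k_\infty/k)}$. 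Moreover the lifting maps factor as $A_{k_n}\to A_{k_\infty}\to A_K$, so $\ker(A_{k_n}\to A_{k_\infty})\subseteq \ker(A_{k_n}\to A_K)$. Hence, once $k_\infty\subseteq K$ is found with $X_{k_\infty}^0\neq 0$, Ozaki's theorem delivers an $n$ with $\ker(A_{k_n}\to A_{k_\infty})\neq 0$, whence $\ker(A_{k_n}\to A_K)\neq 0$, and we take $k'=k_n$.

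The core task is therefore to produce a $\Z_p$-extension $k_\infty \subseteq K$ with $X_{k_\infty}^0\neq 0$. Pick $\xi\in X_K^0\setminus\{0\}$; its $\cgr{\ga(K/k)}$-annihilator has height $\geq 2$ in the $(d+1)$-dimensional regular local ring $\cgr{\ga(K/k)}$. The $\Z_p$-extensions of $k$ inside $K$ are parametrized by corank-$1$ closed subgroups $H\subseteq\ga(K/k)$, i.e.\ by points of $\Bbb P^{d-1}(\Z_p)$. For the image $\bar\xi\in X_{k_\infty}\simeq X_K/I_HX_K$ to be a non-zero element of the maximal finite submodule $X_{k_\infty}^0$, one needs (i) $\xi\notin I_HX_K$ and (ii) the ideal $\mathrm{Ann}(\xi)+I_H$ to be primary to the maximal ideal of $\cgr{\ga(K/k)}$, so that the cyclic $\cgr{\ga(k_\infty/k)}$-submodule generated by $\bar\xi$ is finite. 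Condition~(ii) is a ``general position'' statement: $V(\mathrm{Ann}(\xi))\cap V(I_H)$ in $\mathrm{Spec}\,\cgr{\ga(K/k)}$ has expected dimension $(d-1)+2-(d+1)=0$, so it collapses to the closed point for generic $H$.

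The main obstacle is verifying (i) and (ii) simultaneously. Although (ii) holds outside a proper subset of the parameter space of $H$, condition~(i) depends on finer features of the $\cgr{\ga(K/k)}$-module $X_K$ beyond $\mathrm{Ann}(\xi)$, so ensuring $\xi\notin I_HX_K$ for an $H$ that also satisfies (ii) is the heart of the matter. A natural route is to show that $\bigcap_H I_HX_K=0$ as $H$ ranges over corank-$1$ closed subgroups, or to construct $H$ explicitly using the local module structure of $X_K$ at $\xi$; either approach must leverage the richness of $\Z_p$-quotients of $\ga(K/k)$ inside the $\Z_p^d$-extension.
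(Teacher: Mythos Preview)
Your overall strategy coincides with the paper's: produce a $\Z_p$-extension $k_\infty/k$ inside $K$ with $X_{k_\infty}^0\neq 0$, then invoke Theorem~\ref{Ozaki} and the inclusion $\ker(A_{k_n}\to A_{k_\infty})\subseteq\ker(A_{k_n}\to A_K)$. However, the argument stops exactly where you yourself locate ``the heart of the matter'': conditions (i) and (ii) are stated but not verified, and neither proposed route (showing $\bigcap_H I_HX_K=0$, or an ad hoc construction of $H$) is carried out. Even if that intersection vanished, it would only yield (i) for \emph{some} $H$, not for an $H$ also satisfying (ii); and the expected-dimension heuristic for (ii) is not a proof over the profinite parameter set of corank-$1$ subgroups.

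The paper supplies two ingredients you are missing. For (i), Lemma~\ref{lem2} shows the characteristic ideal of $X_K$ is coprime to $\sigma-1$ for every $1\neq\sigma\in G$, whence $(X_K/X_K^0)^{\overline{\langle\sigma\rangle}}=0$ (Lemma~\ref{lem3}); by the snake lemma this makes $(X_K^0)_{\overline{\langle\sigma\rangle}}\hookrightarrow (X_K)_{\overline{\langle\sigma\rangle}}\simeq X_{K^{\overline{\langle\sigma\rangle}}}$ for \emph{every} nontrivial $\sigma$, so the injectivity obstruction disappears uniformly rather than only for a single element $\xi$. For (ii), the paper does not jump straight to $\Z_p$: it descends one rank at a time. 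Lemma~\ref{prop0} gives a corank-$1$ subgroup $H$ with $X_K^0$ finitely generated over $\Lambda_H$, and Lemma~\ref{prop1} then produces $\sigma\in H$ with $(X_K^0)_{\overline{\langle\sigma\rangle}}$ still nonzero and pseudo-null over $\Lambda_{G/\overline{\langle\sigma\rangle}}$; combined with the injectivity above, $X_{K^{\overline{\langle\sigma\rangle}}}^0\neq 0$. Iterating reaches a $\Z_p^2$-extension $L/k$ with $X_L^0\neq 0$. The final step from $\Z_p^2$ to $\Z_p$ is precisely where your dimension count is only tight, and it requires a separate input: Ozaki's result Lemma~\ref{prop2}, which guarantees that $(X_L^0)_V$ is finite for all but finitely many rank-$1$ subgroups $V\subseteq\ga(L/k)$. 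This last ingredient is invisible to a general-position heuristic and is essential to the argument.
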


\begin{cor}
Let $K/k$ be a $\Z_p^d$-extension. 
Suppose that the condition A holds and that $A_k\simeq \Z/p^c$ for some $c\in\Z_{>0}$. 
Then $X_K^0\neq 0$ if and only if 
there is an intermediate field $k\subseteq k' \subseteq K$ with $[k':k]<\infty$ such that 
${\rm Ker}(A_{k'}\to A_K)\neq 0$. 
\end{cor}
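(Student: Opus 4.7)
The corollary is essentially the formal conjunction of Theorems \ref{main1} and \ref{main2}, so my plan is to prove it by invoking both directions separately under the slightly different hypotheses they require.

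First I would verify the implication ``$X_K^0\neq 0$ $\Longrightarrow$ the existence of an intermediate $k'$ with ${\rm Ker}(A_{k'}\to A_K)\neq 0$.'' This direction needs no assumption on the structure of $A_k$ beyond Condition A, so it is immediate from Theorem \ref{main2}. In the write-up I would just quote Theorem \ref{main2} and note that Condition A is part of the hypotheses of the corollary.

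Next I would handle the converse. Here the hypothesis $A_k\simeq \Z/p^c$ for some $c\in\Z_{>0}$ is genuinely used (it is part of the assumptions of Theorem \ref{main1}, but not of Theorem \ref{main2}). Given an intermediate field $k\subseteq k'\subseteq K$ with $[k':k]<\infty$ and ${\rm Ker}(A_{k'}\to A_K)\neq 0$, Theorem \ref{main1} directly yields $X_K^0\neq 0$.

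Combining the two directions gives the stated equivalence, and the proof is complete. The only thing to watch out for is to explicitly note that the corollary's cyclic assumption on $A_k$ is only needed for the ``$\Longleftarrow$'' direction, so there is no genuine mathematical obstacle here: the work has all been placed in Theorems \ref{main1} and \ref{main2}, and the corollary is a one-line consequence.
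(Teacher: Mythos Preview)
Your proposal is correct and matches the paper's approach: the corollary is stated immediately after Theorems~\ref{main1} and~\ref{main2} with no separate proof, since it is precisely the conjunction of those two results. Your observation that the cyclic hypothesis on $A_k$ is only needed for the ``$\Longleftarrow$'' direction (via Theorem~\ref{main1}) is also accurate.
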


There have been some related earlier studies, 
we introduce here two of them. 

\begin{thm}[Proposition 5.B of Minardi \cite{Minardi}]
Let $K/k$ be a $\Z_p^d$-extension. 
Suppose that the condition A holds. 
Then $X_K=X_K^0$ if and only if there is a sub-$\Z_p$-extension $F_{\infty}/F$ of $K/k$ 
with $[F:k]<\infty$ such that $A_F={\rm Ker}(A_F\to A_{F_{\infty}})$. 
\end{thm}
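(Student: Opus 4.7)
Proof plan: The argument rests on two equivalences valid under condition A.

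First, for any $\Z_p$-sub-extension $F_\infty/F$ of $K/k$ with $[F:k]<\infty$, the extension $F_\infty/F$ is totally ramified at the unique prime above $p$, so Theorem~\ref{Ozaki} applies. Combining its conclusion $\mathrm{Ker}(A_F\to A_{F_\infty})=\mathrm{Im}(X_{F_\infty}^0\to A_F)$ with the surjection $X_{F_\infty}\twoheadrightarrow A_F$ coming from total ramification, and invoking Nakayama's lemma applied to $\sigma-1$ in the Jacobson radical of $\Z_p\llbracket\mathrm{Gal}(F_\infty/F)\rrbracket$, one obtains
$$A_F=\mathrm{Ker}(A_F\to A_{F_\infty})\iff X_{F_\infty}\text{ is finite.}$$
Second, still under condition A, $K/F_\infty$ is totally ramified at the unique prime above $p$; since a totally ramified extension has no nontrivial unramified subextension, $L_{F_\infty}\cap K=F_\infty$, so $L_{F_\infty}K/K$ is unramified abelian, giving a surjection $X_K\twoheadrightarrow X_{F_\infty}$. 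Setting $\Gamma:=\mathrm{Gal}(K/k)$, $\Lambda:=\Z_p\llbracket\Gamma\rrbracket$, and $H:=\mathrm{Gal}(K/F_\infty)$ of $\Z_p$-rank $d-1$, the kernel contains $I_HX_K$, where $I_H$ is the augmentation ideal of $H$ in $\Lambda$. A standard analysis of the local contribution of the ramified prime shows the induced map $X_K/I_HX_K\to X_{F_\infty}$ has finite kernel, so these two modules are simultaneously finite or infinite.

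For the backward direction, assume $A_F=\mathrm{Ker}(A_F\to A_{F_\infty})$; the first equivalence gives $X_{F_\infty}$ finite, and the second then gives $X_K/I_HX_K$ finite. Suppose for contradiction $X_K\neq X_K^0$, so $X_K$ has a minimal prime $\mathfrak{q}$ of height one in its support. Since $\Lambda$ is a UFD, $\mathfrak{q}=(g)$ for an irreducible $g$, and subadditivity of heights gives $\mathrm{ht}(\mathfrak{q}+I_H)\leq 1+(d-1)=d<\dim\Lambda=d+1$. Hence there is a non-maximal prime $\mathfrak{p}\supseteq\mathfrak{q}+I_H$; localizing $X_K/I_HX_K$ at $\mathfrak{p}$ and applying Nakayama (since $I_H\subseteq\mathfrak{p}$) yields $(X_K/I_HX_K)_\mathfrak{p}\neq 0$, contradicting the finiteness of $X_K/I_HX_K$. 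Hence $X_K=X_K^0$.

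For the forward direction, assume $X_K=X_K^0$. If $A_k=0$, the cited result of Iwasawa gives $X_K=0$ and the conclusion is trivial. Otherwise, the support of $X_K$ is a finite union of primes of height $\geq 2$. I seek a rank-$(d-1)$ $\Z_p$-submodule $H\subseteq\Gamma$ such that the $2$-dimensional variety $V(I_H)$ meets each $(\leq d-1)$-dimensional component of $\mathrm{supp}(X_K)$ only at the maximal ideal of $\Lambda$. By a dimension count inside the $(d+1)$-dimensional space $\mathrm{Spec}\,\Lambda$, the locus of ``bad'' $H$'s (for which some intersection is positive-dimensional) forms a proper closed subvariety of the $p$-adic Grassmannian of rank-$(d-1)$ subgroups of $\Gamma$; choosing $H$ outside this locus forces $\mathrm{supp}(X_K/I_HX_K)\subseteq\{\mathfrak{m}\}$, so $X_K/I_HX_K$ is finite. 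The second equivalence then gives $X_{F_\infty}$ finite for $F_\infty:=K^H$, and the first gives $A_k=\mathrm{Ker}(A_k\to A_{F_\infty})$, proving the claim with $F=k$.

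The principal obstacle is the genericity argument in the forward direction: one must verify that transversal $H$ actually exist inside the $p$-adic Grassmannian of subgroups of $\Gamma$. This reduces, via the initial forms of the generators $h-1$ in the associated graded of $\Lambda$ at $\mathfrak{m}$, to a standard transversality argument in linear algebra over $\F_p$, but the translation between subgroups $H\subseteq\Gamma$ and ideals $I_H\subseteq\Lambda$ must be handled with care.
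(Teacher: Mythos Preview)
The paper does not give its own proof of this statement; it is quoted from Minardi's thesis as one of the ``related earlier studies.'' So there is no in-paper proof to compare against, and I evaluate your proposal on its own terms and against the tools the paper does develop.

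Your backward direction is essentially correct, with two small simplifications available. First, under condition~A the paper's Lemma~\ref{lem1} gives an \emph{isomorphism} $X_{F_\infty}\simeq (X_K)_H=X_K/I_HX_K$, so your ``finite-kernel'' step is unnecessary. Second, your appeal to ``subadditivity of heights'' is cleaner via Krull's height theorem: $\mathfrak q+I_H$ is generated by $d$ elements (one for the principal $\mathfrak q$, and $d-1$ elements $\sigma_i-1$ for $I_H$), hence every minimal prime over it has height at most $d<d+1=\dim\Lambda$. Your localization--Nakayama conclusion then goes through.

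The forward direction, however, has a genuine gap --- as you yourself flag. The ``$p$-adic Grassmannian'' of rank-$(d-1)$ subgroups of $\Gamma$ is not an algebraic variety on which a dimension count or a ``proper closed bad locus'' has an evident meaning, and the promised reduction to linear algebra over $\F_p$ via initial forms is not carried out. What must actually be shown is that among the corank-one subgroups $H\subseteq G$ there is one with $\mathrm{supp}(X_K)\cap V(I_H)=\{\mathfrak m\}$, and this genericity does not follow from a soft dimension count because the family $\{V(I_H)\}_H$ is highly constrained. The paper in fact assembles precisely the machinery that makes this rigorous: Lemma~\ref{prop0} (Greenberg) produces $H$ with $G/H\simeq\Z_p$ such that the pseudo-null module is finitely generated over $\Lambda_H$; Lemma~\ref{prop1} then shows that for all but finitely many rank-one $V\subseteq H$ the coinvariants remain pseudo-null over $\Lambda_{G/V}$, allowing an induction from $d$ down to $2$; and Lemma~\ref{prop2} (Ozaki) handles the last step from a $\Z_p^2$-extension to a $\Z_p$-extension with finite Iwasawa module, the coprimality hypothesis there being verified exactly as in the proof of Theorem~\ref{main2} via $A_{k_n}\simeq X_{k_\infty}/(\gamma_V^{p^n}-1)X_{k_\infty}$ finite. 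Replacing your Grassmannian sketch by this inductive descent yields a complete proof of the forward direction with $F=k$.
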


\begin{thm}[Lai and Tan \cite{Lai-Tan}]
Let $K/k$ be a $\Z_p^d$-extension. 
Then we have 
$\displaystyle{
\varprojlim_{k\subseteq k' \subseteq K,[k':k]<\infty}{\rm Ker}(A_{k'}\to A_K)\subseteq X_K^0
}
$. 
\end{thm}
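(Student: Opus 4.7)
The plan is to show that $Y := \varprojlim_F {\rm Ker}(A_F \to A_K)$, viewed as a $\Lambda$-submodule of $X_K$ with $\Lambda := \cgr{\ga(K/k)}$, is pseudo-null; since $X_K^0$ is by definition the largest pseudo-null $\Lambda$-submodule of $X_K$, this will give the desired inclusion. First I would verify that $Y$ is indeed a $\Lambda$-submodule: a norm-compatible family $(y_F)_F$ with each $y_F \in {\rm Ker}(A_F \to A_K)$ patently defines an element of $X_K = \varprojlim_F A_F$, and the $\ga(K/k)$-equivariance of norm and lifting maps makes $Y$ stable under the $\Lambda$-action.

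Since $X_K$, and hence $Y$, is a Noetherian $\Lambda$-module, it suffices to show that $\Lambda y$ is pseudo-null for every $y \in Y$, i.e., that ${\rm Ann}_\Lambda(y)$ has height $\geq 2$. The next step is to exploit capitulation: for each finite $F$, the class $y_F$ vanishes in $A_{F'}$ for some finite $F \subseteq F' \subseteq K$, and translating this via class field theory, by viewing $y_F$ as the datum of an unramified abelian $p$-extension of $F$ that becomes split when lifted to $F'$, produces a concrete element of $\Lambda$ annihilating a controlled orbit of $y$. Carrying out this construction along a cofinal tower inside a chosen $\Z_p$-subextension of $K/k$ furnishes a first annihilator $f_1 \in {\rm Ann}_\Lambda(y)$ built from the relevant augmentation-ideal data for $\ga(K/F')$.

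The hardest part will be producing a second annihilator $f_2 \in {\rm Ann}_\Lambda(y)$ that is independent from $f_1$ in the sense of height-$1$ primes of $\Lambda$. The natural approach is to rerun the construction along a second $\Z_p$-direction of $\ga(K/k)$ chosen transversely to the first, so that the resulting $f_1, f_2$ lie in distinct height-$1$ primes; their joint membership in ${\rm Ann}_\Lambda(y)$ then forces this annihilator to have height $\geq 2$, giving pseudo-nullness. The delicate point is to argue that the capitulation data coming from two independent $\Z_p$-directions genuinely produce annihilators with no common height-$1$ prime divisor; this is where the $\Z_p^d$-structure of $\ga(K/k)$ is essentially used, beyond the purely formal Noetherian properties. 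In the $d=1$ case (Ozaki's setting), pseudo-null coincides with finite and a single sufficiently strong annihilator already works, so the question of coprimality does not arise.
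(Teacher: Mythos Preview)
The paper does not prove this statement: it is quoted as a result of Lai and Tan \cite{Lai-Tan} among the ``related earlier studies'' in the introduction, with no argument given. So there is no proof in the paper to compare your proposal against; any assessment has to be on your proposal's own merits.

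As it stands, your proposal is a plan rather than a proof, and the step it hinges on is not actually carried out. You assert that, because $y_F$ capitulates in some $F'$, class field theory ``produces a concrete element of $\Lambda$ annihilating a controlled orbit of $y$,'' and that running this along two independent $\Z_p$-directions gives two annihilators $f_1,f_2$ with no common height-$1$ prime divisor. But you never say what $f_1$ is. Capitulation of $y_F$ in $F'$ is a statement about the lifting map $A_F\to A_{F'}$, not about the $\Lambda$-action on the norm-compatible system $y\in X_K$. The only obvious algebraic consequence is the identity $N_{F'/F}\circ i_{F'/F}=[F':F]$, which yields $[F':F]\cdot y_F=0$; this says nothing beyond the finiteness of $A_F$ and, in the limit, at best places an annihilator of $y$ inside the height-$1$ prime $(p)$, which is exactly what you must avoid. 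Your phrase ``augmentation-ideal data for $\ga(K/F')$'' suggests elements like $\sigma-1$, but it is not explained why any such element should kill $y$ in $X_K$.

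You are candid that producing the second annihilator and checking coprimality is ``the hardest part'' and ``the delicate point,'' but in fact already the first annihilator is missing. Note also that the Lai--Tan statement carries no ramification hypothesis (no Condition~A), so one cannot freely invoke identifications like $(X_K)_{\ga(K/F)}\simeq X_F$ or Ozaki's Theorem~\ref{Ozaki} along sub-$\Z_p$-extensions. If you want to pursue this route, you would need to supply a genuine mechanism that converts capitulation information into explicit $\Lambda$-annihilators and then prove the coprimality claim; as written, both are asserted without justification.
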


We set here some notations. 
For a profinite group $G$, 
let $\Lambda_G=\cgr{G}$ be the completed group ring of $G$ with coefficients in $\Z_p$. 
In the rest of this section, 
let $G\simeq \Z_p^d$. 
It is known as Serre's isomorphism that $\Lambda_G$ is isomorphic to the 
formal power series ring in $d$-variables with coefficients in $\Z_p$. 
Hence $\Lambda_G$ is a noetherian, 
integrally closed, complete, regular local ring. 
By Auslander--Buchsbaum's theorem \cite{Auslander-Buchsbaum}, 
$\Lambda_G$ and $\Lambda_G/p\Lambda_G$ are UFDs. 
A finitely generated $\Lambda_G$-module $M$ is called pseudo-null 
if the annihilator ideal of $M$ over $\Lambda_G$ is not contained in any height $1$ prime ideals of 
$\Lambda_G$. 
When $d=1$, 
it is known that $M$ is pseudo-null if and only if is finite. 
For a topological group $\f{G}$ and a topological $\f{G}$-module $M$, 
let $M^{\f{G}}$ and $M_{\f{G}}$ be the $\f{G}$-invariant submodule and the 
$\f{G}$-coinvariant module of $M$. 
For an algebraic extension $F/\Q$ not necessary finite, 
let $L_F/F$ be the maximal unramified abelian pro-$p$ extension 
and $X_F$ its Galois group. 
Let $A_F$ be the $p$-part of the ideal class group of $F$. 
If $[F:\Q]<\infty$, 
$X_F\simeq A_F$ by unramified class field theory. 

\section{Preliminaries}

\begin{lem}\label{PN}
Let $A$ be a UFD and $I$ an ideal of $A$. 
The following three statements are equivalent. 
\\
$(1)$ 
The ideal $I$ is not contained in any height $1$ prime ideals of $A$. 
\\
$(2)$ 
There are $f,g\in I$ such that $f$ and $g$ are relatively prime. 
\\
$(3)$ 
For all $0\neq f\in A$ there is $g\in I$ such that $f$ and $g$ are relatively prime. 
\end{lem}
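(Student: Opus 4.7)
The plan is to prove the three implications (2) $\Rightarrow$ (1) $\Rightarrow$ (3) $\Rightarrow$ (2), after recalling the standard fact that in a UFD every height $1$ prime ideal is principal, generated by a prime (equivalently irreducible) element. This identification is the only thing that ties the ``relatively prime'' condition to ``height $1$ primes'', and everything else is a general argument in commutative algebra.

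For (2) $\Rightarrow$ (1), I would argue by contrapositive: if $I$ sits inside some height $1$ prime $\f{p}$, then $\f{p}=(p)$ for a prime element $p$, so $p$ divides every element of $I$; in particular $p$ is a common divisor of $f$ and $g$, contradicting their being relatively prime.

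For (1) $\Rightarrow$ (3), fix $0\neq f\in A$ and factor $f = u p_1^{e_1}\cdots p_n^{e_n}$ into primes. The height $1$ primes containing $f$ are exactly $(p_1),\ldots,(p_n)$. By hypothesis $I\not\suq (p_i)$ for each $i$, and classical prime avoidance (valid whenever all the avoided ideals are prime) then produces $g\in I$ with $g\notin (p_1)\cup\cdots\cup (p_n)$. By construction no prime divisor of $f$ divides $g$, so $f$ and $g$ are relatively prime. The main technical point is just citing (or briefly reproving) prime avoidance; the UFD hypothesis enters only through the fact that $f$ has finitely many prime divisors, which makes the collection of height $1$ primes to avoid finite.

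For (3) $\Rightarrow$ (2), I first note that (3) forces $I\neq 0$ (otherwise the only available $g$ is $0$, which is not relatively prime to any nonzero $f$). Then I pick any $0\neq f\in I$ and apply (3) to this $f$ itself, obtaining $g\in I$ with $\gcd(f,g)=1$, which is exactly (2). I don't expect any real obstacle here; if anything the only delicate spot is to keep straight that ``relatively prime'' really means ``no nontrivial common prime divisor'' (equivalently, no irreducible divides both), which is well-defined precisely because $A$ is a UFD.
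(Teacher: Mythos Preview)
Your proof is correct. The implications $(2)\Rightarrow(1)$ and $(3)\Rightarrow(2)$ match the paper's treatment (the paper simply calls the latter ``trivial''; your remark that (3) forces $I\neq 0$ is a small extra care the paper omits). For $(1)\Rightarrow(3)$ the paper does not cite prime avoidance as a black box but instead reproves it in this principal-prime setting: it inducts on the number $s$ of distinct prime factors of $f$, and in the inductive step splits $f=f_1f_2$ with $f_1,f_2$ coprime, finds $g_1,g_2\in I$ coprime to $f_1,f_2$ respectively, and takes $g=g_2f_1+g_1f_2\in I$. Your route is shorter because it delegates to a standard lemma; the paper's route is self-contained and explicitly constructive. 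Since the paper's induction is precisely the usual proof of prime avoidance specialised to principal primes, the difference is one of presentation rather than of mathematical content.
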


\begin{proof}
$(3)\Rightarrow (2):$ 
Trivial. 
$(2)\Rightarrow (1):$ 
Let  $f,g\in I$ and suppose that $f$ and $g$ are relatively prime. 
Then there is no prime element $q\in A$ such that both of $f$ and $g$ are divided by $q$. 
Since $(f,g)\subseteq I$, 
$I$ is not contained in any height $1$ prime ideals. 
$(1)\Rightarrow (3):$ The following proof is written in lemma 4.3 of \cite{Minardi}. 
Suppose that $I$ is not contained in any height $1$ prime ideals of $A$. 
Let $s$ be the number of pairwisely non associated all prime factors of $f$. 
We prove by using an induction on $s$. 
Let $s=1$. 
Then $f=uq_1^m$ for a unit $u$ and an integer $m$. 
Since $I$ is not contained in any height $1$ prime ideals, 
it follows that $I\not\subseteq (q_1)$, 
and hence there is $g\in I$ such that $f$ and $g$ are relatively prime. 
Suppose that $s>1$. 
Let $f=f_1f_2$ be a decomposition of $f$ by non units $f_1,f_2$ such that 
$f_1$ and $f_2$ are relatively prime. 
By the assumption of our induction, 
there are $g_1,g_2\in I$ such that each of two pairs of elements 
$f_1,g_1$ and $f_2,g_2$ are relatively prime respectively. 
Put $g=g_2f_1+g_1f_2\in I$. 
Then $f$ and $g$ are relatively prime. 
\end{proof}

\begin{lem}\label{lem1}
Let $K/k$ be a $\Z_p^d$-extension. 
Suppose that the condition A holds. 
For each intermediate field $F$ of $K/k$, 
we have $X_F\simeq (X_K)_{\ga(K/F)}$.
\end{lem}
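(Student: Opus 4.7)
The plan is to produce a natural surjection $X_K\twoheadrightarrow X_F$ that factors through $(X_K)_{\ga(K/F)}$, and then show the induced surjection is injective by a ramification analysis that uses Condition A. Write $H=\ga(K/F)$ for brevity.

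Under Condition A, $L_F\cap K=F$, since any subfield of this intersection is both unramified over $F$ and totally ramified at the unique prime of $F$ above $p$. Hence $L_F K$ is unramified abelian over $K$, so $L_F K\subseteq L_K$, and restriction yields an isomorphism $\ga(L_F K/K)\simeq X_F$; moreover $\ga(L_F K/F)\simeq X_F\times H$ is abelian, so $H$ acts trivially on $\ga(L_F K/K)$. Thus the natural restriction $X_K\to X_F$ is surjective and factors through $(X_K)_H$, giving a surjection $(X_K)_H\twoheadrightarrow X_F$ whose injectivity is equivalent to $N=L_F K$, where $N\subseteq L_K$ denotes the fixed field of the $\Lm_H$-submodule $(H-1)X_K\subseteq X_K$.

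The containment $L_F K\subseteq N$ is immediate. For the reverse, I would analyze $N/F$. Since $(H-1)X_K$ is $H$-stable, $N/F$ is Galois, with $\ga(N/F)$ a central extension of $H$ by $(X_K)_H$; it is unramified outside the prime $v$ of $F$ above $p$, because $N/K$ is unramified and any prime ramifying in a $\Z_p^d$-extension of $k$ lies above $p$. The inertia $I_v\subseteq \ga(N/F)$ injects into $H$ (since $N/K$ is unramified at $v$) and, because $K\subseteq N$ with $K/F$ totally ramified at $v$ by Condition A, also surjects onto $H$; hence $I_v\xrightarrow{\sim}H$ provides a section of the central extension, forcing $\ga(N/F)\simeq H\times (X_K)_H$ and in particular $N/F$ abelian. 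The fixed field $N^{I_v}/F$ is then unramified abelian, so $N^{I_v}\subseteq L_F$, and $N=K\cdot N^{I_v}\subseteq L_F K$. The main obstacle is this inertia analysis: Condition A is used both to reduce ramification of $N/F$ to a single prime and to force the inertia there to be isomorphic to all of $H$, which yields the splitting identifying $N$ with $L_F K$.
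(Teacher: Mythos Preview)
Your argument is correct. The key mechanism is the same as the paper's: under Condition~A there is a unique prime of $F$ above $p$, it is totally ramified in $K/F$, and its inertia group inside the relevant Galois group furnishes a complement to the unramified part, forcing the ``maximal abelian over $F$'' piece of $L_K$ to equal $L_F K$.

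The organization differs. The paper chooses a basis $\sigma_1,\dots,\sigma_r$ of $H=\ga(K/F)$, builds the tower $F=K_0\subset K_1\subset\cdots\subset K_r=K$ with $\ga(K_i/K_{i-1})\simeq\Z_p$, and at each step applies the classical one-variable argument (let $L_i$ be the maximal subfield of $L_{K_i}$ abelian over $K_{i-1}$; the inertia of the unique prime above $p$ shows $L_i=K_iL_{K_{i-1}}$, hence $X_{K_{i-1}}\simeq X_{K_i}/(\sigma_i-1)X_{K_i}$). You instead treat all of $H$ at once: you identify $N=L_K^{(H-1)X_K}$, note that $\ga(N/F)$ is a central extension of $H$ by $(X_K)_H$, and use that the inertia subgroup at (a prime above) $v$ maps isomorphically onto $H$ to split this extension, whence $N=K\cdot N^{I_v}\subseteq L_FK$. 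Your route avoids the induction and makes the role of centrality explicit; the paper's route stays within the familiar $\Z_p$-extension template. One small point of hygiene: $I_v$ really means $I_w$ for a chosen prime $w$ of $N$ above $v$; once you have shown $\ga(N/F)$ abelian, the inertia groups at all such $w$ coincide, so the notation is ultimately unambiguous.
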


\begin{proof}
It follows that $K/F$ is a $\Z_p^r$-extension for some $r\leq d$. 
Let $\ga(K/F)=\overline{\lr{\sigma_1,\cdots,\sigma_r}}$. 
One can see that $(X_K)_{\ga(K/F)}=X_K/(\sigma_1-1,\cdots,\sigma_r-1)X_K$. 
Let $K_i$ be the fixed field of $\overline{\lr{\sigma_{i+1},\cdots,\sigma_r}}$ for $0\leq i\leq r-1$. 
Then we have a tower of fields 
$F=K_0\subseteq K_1 \subseteq \cdots \subseteq K_r=K$. 
By the condition A, 
the extension $K/k$ is totally ramified at the unique prime of $k$ lying above $p$, 
and hence extensions $K_i/K_{i-1}$ are also totally ramified at the unique prime of $K_{i-1}$ 
lying above $p$ for all $i$ with $1\leq i\leq r$. 
Let $L_i$ be the maximal subfield of $L_{K_i}$ which is abelian over $K_{i-1}$. 
It holds that $\ga(L_i/K_i)\simeq (X_{K_i})_{\ga(K_i/K_{i-1})}=X_{K_i}/(\sigma_i-1)X_{K_i}$. 
Let $I_i$ be the inertia subgroup in $L_i/K_{i-1}$ of the unique prime of $K_{i-1}$ lying above $p$. 
It then holds that $I_i=\ga(L_i/L_{K_{i-1}})$. 
Since $L_i/K_i$ is unramified, 
we have $I_i\cap \ga(L_i/K_i)=1$, 
and hence $L_i=K_iL_{K_{i-1}}$ holds. 
By the definition of $L_i$ it follows that $L_{K_{i-1}}\cap K_i=K_{i-1}$, 
and hence we have $X_{K_{i-1}}\simeq \ga(L_i/K_i)\simeq X_{K_i}/(\sigma_i-1)X_{K_i}$ for all $i$. 
Thus it holds that $X_F\simeq X_K/(\sigma_1-1,\cdots,\sigma_r-1)X_K= (X_K)_{\ga(K/F)}$.
\end{proof}

\begin{lem}\label{lem2}
Let $K/k$ be a $\Z_p^d$-extension. 
Suppose that the condition A holds. 
Let $1\neq \sigma \in G=\ga(K/k)$. 
Then a generator of the characteristic ideal of $X_K$ over $\Lambda_{G}$ 
and $\sigma-1$ are relatively prime. 
\end{lem}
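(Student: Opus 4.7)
The plan is to set $H = \overline{\lr{\sigma}}\simeq\Z_p$, $F = K^H$, so that Lemma \ref{lem1} identifies $X_F$ with $X_K/(\sigma-1)X_K$, and to aim for a contradiction from the assumption that some prime element $q\in\Lambda_G$ divides both a generator $f$ of the characteristic ideal of $X_K$ and $\sigma-1$. To describe $q$ concretely, I write $\sigma=(\sigma')^{p^m}$ with $\sigma'\in G$ primitive, extend $\sigma'$ to a $\Z_p$-basis of $G$, and set $T_1=\sigma'-1$, so that $\Lambda_G\simeq\Z_p[[T_1,\ldots,T_d]]$ and
\[
\sigma-1 \;=\; (1+T_1)^{p^m}-1 \;=\; \prod_{i=0}^{m}\Phi_{p^i}(1+T_1),
\]
each factor being Eisenstein distinguished in $T_1$ over $\Z_p[[T_2,\ldots,T_d]]$, hence irreducible in $\Lambda_G$; thus $q$ is associate to some $\Phi_{p^i}(1+T_1)$ and occurs in $\sigma-1$ with multiplicity one.

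Since $q\mid f$, the localization $(X_K)_{(q)}$ is a nonzero finitely generated module over the DVR $(\Lambda_G)_{(q)}$ (its length is $v_q(f)\geq 1$). The complementary factor $(\sigma-1)/q$ is coprime to $q$ and hence a unit in this DVR, so $\sigma-1$ generates the maximal ideal $(q)$, and Nakayama's lemma gives
\[
(X_F)_{(q)} \;\simeq\; (X_K)_{(q)}/q(X_K)_{(q)} \;\neq\; 0.
\]

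On the other hand, $G/H\simeq\Z/p^m\Z\times\Z_p^{d-1}$; letting $F''\subset F$ be the fixed field of the $\Z_p^{d-1}$-direct factor, $F/F''$ is a $\Z_p^{d-1}$-extension with $[F'':k]=p^m$, and Iwasawa and Greenberg's torsion theorem supplies a nonzero annihilator $P\in\Z_p[[T_2,\ldots,T_d]]\simeq\Lambda_{\ga(F/F'')}$ of $X_F$. The identification $\Lambda_G/(q)\simeq\Z_p[\zeta_{p^i}][[T_2,\ldots,T_d]]$ (via $T_1\mapsto\zeta_{p^i}-1$) restricts to the natural inclusion on $\Z_p[[T_2,\ldots,T_d]]$, so $P\notin(q)$, forcing $(X_F)_{(q)}=0$ and producing the desired contradiction. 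I expect the main obstacle to be the non-primitive case $m\geq 1$, where $\Lambda_{G/H}$ ceases to be a domain and has several minimal primes, so one cannot cleanly appeal to a ``characteristic ideal of $X_F$''; localizing $X_K$ at the single height-one prime $(q)\subset\Lambda_G$ keeps the argument inside a DVR and sidesteps this complication.
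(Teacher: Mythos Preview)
Your proof is correct and rests on the same strategic idea as the paper's: write $\sigma=\sigma_1^{p^a}$, factor $\sigma-1$ into cyclotomic pieces in the variable $T_1=\sigma_1-1$, and derive a contradiction from the Iwasawa--Greenberg torsion theorem applied to $X_{K^{\overline{\lr{\sigma}}}}$ over $\Lambda_H\simeq\Z_p\llbracket T_2,\ldots,T_d\rrbracket$. The execution differs, however. The paper passes to $X_K/X_K^0$, embeds it into an elementary module $E$ with pseudo-null cokernel $Z$, shows $Z/(\sigma-1)Z$ is $\Lambda_H$-torsion, and then checks directly that the summand $\Lambda_G/(q^e,\sigma-1)=\Lambda_G/(q^e)$ of $E/(\sigma-1)E$ contains a copy of $\Lambda_H$ and so cannot be $\Lambda_H$-torsion. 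Your localization at the height-one prime $(q)$ replaces this bookkeeping: pseudo-null modules die upon localization, so you land immediately in the DVR $(\Lambda_G)_{(q)}$, where $\sigma-1$ is a uniformizer and Nakayama gives $(X_F)_{(q)}\neq 0$; the annihilator $P\in\Z_p\llbracket T_2,\ldots,T_d\rrbracket$ survives in $\Lambda_G/(q)\simeq\Z_p[\zeta_{p^i}]\llbracket T_2,\ldots,T_d\rrbracket$ and forces $(X_F)_{(q)}=0$. This is cleaner and avoids both the reduction modulo $X_K^0$ and the analysis of the cokernel $Z$, at the cost of invoking the characterization of $v_q(f)$ as the length of $(X_K)_{(q)}$.
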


\begin{proof}
Let $M$ be the fixed field of $\overline{\lr{\sigma}}$. 
Let $\{\sigma_1,\cdots,\sigma_d\}$ be a basis of $\ga(K/k)$ such that $\sigma_1^{p^a}=\sigma$ 
with a non-negative integer $a$. 
Then we have a decomposition 
$\ga(M/k)\simeq \Z/p^a\times \overline{\langle \sigma_2,\cdots, \sigma_2 \rangle}$. 
Let $F$ be the fixed field of $H=\overline{\langle \sigma_2,\cdots, \sigma_2 \rangle}$ in $M$. 
Then $[F:k]<\infty$ and $M/F$ is a $\Z_p^{d-1}$-extension. 
It is known that the module $X_M$ is finitely generated and torsion over $\Lambda_H$. 
By lemma \ref{lem1}, 
one can see that $X_M\simeq (X_K)_{\ga(K/M)}=X_K/(\sigma-1)X_K$. 
Let $f\in \Lambda_G$ be a generator of the characteristic ideal of 
$X_K$ over $\Lambda_G$. 
Now, 
suppose that $f$ and $\sigma-1$ are not relatively prime. 
Let $q$ be a common prime factor of $f$ and $\sigma-1$. 
Then $X_K$ is pseudo-isomorphic to a module of the form 
$$
E=\Lambda_G/(q^e)\oplus\bigoplus_{i=1}^s\Lambda_G/(q_i^{e_i}),
$$
where $q_1,\cdots, q_s\in \Lambda_G$ denote prime elements of $\Lambda_G$, 
and $e,e_1,\cdots,e_s$ are positive integers. 
If we need, 
by replacing $X_K$ with $X_K/X_K^0$, 
we may assume that $X_K^0=0$. 
Then there is an injective morphism $X_K\to E$ with a pseudo-null cokernel $Z$. 
By lemma \ref{PN}, 
there are two relatively prime annihilators $u,v$ of $Z$. 
If $u$ is a multiple of $\sigma-1$, 
then $v$ and $\sigma-1$ are relatively prime, 
and hence $v\not\equiv 0\bmod{(\sigma-1)\Lambda_G}$ and $v+(\sigma-1)\Lambda_G$ 
annihilates $Z/(\sigma-1)Z$. 
Also, 
suppose that $u$ is not a multiple of $\sigma-1$. 
Then $u\not\equiv 0\bmod{(\sigma-1)\Lambda}$ and $u+(\sigma-1)\Lambda_G$ 
annihilates $Z/(\sigma-1)Z$. 
In both cases, 
$Z/(\sigma-1)Z$ is a torsion $\Lambda_G/(\sigma-1)\Lambda_G$-module. 
As a $\Lambda_H$-module, 
we have an isomorphism
$$
\Lambda_H^{\oplus p^a}\simeq
\Lambda_G/(\sigma-1)\Lambda_G
\simeq
\Lambda_{\Z/p^a\times H},
\;
(g_1,\cdots,g_{p^a})\mapsto \sum_{i=1}^{p^a}\sigma_1^i\overline{\langle\sigma\rangle}g_i.
$$
This shows that $Z/(\sigma-1)Z$ is torsion over $\Lambda_H$. 
Since 
$$
\Lambda_G/(q,\sigma-1)
=
\Lambda_G/(q)
=
(\cgr{\overline{\langle \sigma_1 \rangle}}/(q))\llbracket H\rrbracket
\supseteq \Lambda_H,
$$ 
$\Lambda_G/(q, \sigma-1)$ is not torsion over $\Lambda_H$, 
and $\Lambda_G/(q^e,\sigma-1)$ is also not torsion since there is a surjective morphism 
$\Lambda_G/(q^e,\sigma-1)\to \Lambda_G/(q, \sigma-1)$. 
This contradicts to the fact that $X_M\simeq X_K/(\sigma-1)X_K$ is torsion over $\Lambda_H$. 
Therefore there are no common prime factors of $f$ and $\sigma-1$. 
\end{proof}

\begin{lem}\label{lem3}
Let $K/k$ be a $\Z_p^d$-extension. 
Suppose that the condition A holds. 
Let $1\neq \sigma \in\ga(K/k)$. 
Then 
we have 
$
(X_K/X_K^0)^{\overline{\langle \sigma \rangle}}=0
$.
\end{lem}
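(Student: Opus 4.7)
The plan is to show directly that $\sigma-1$ acts as a non-zero-divisor on $Y := X_K/X_K^0$, which is exactly the statement $Y^{\overline{\langle \sigma \rangle}} = 0$.

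First I would observe that, by the maximality of $X_K^0$, the quotient $Y$ has no nonzero pseudo-null $\Lambda_G$-submodule. Applying the structure theorem for finitely generated torsion modules over the regular UFD $\Lambda_G$, one obtains a pseudo-isomorphism
\[
Y \longrightarrow E := \bigoplus_{i=1}^{s} \Lambda_G/(q_i^{e_i}),
\]
where $q_1, \ldots, q_s$ are pairwise non-associate prime elements of $\Lambda_G$. Its kernel, being a pseudo-null submodule of $Y$, must vanish, so the map is an embedding. In particular, setting $f := \prod_{i} q_i^{e_i}$, we have $f \cdot E = 0$ and hence $f \cdot Y = 0$, i.e.\ $f$ annihilates $Y$. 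Moreover, since $X_K^0$ is pseudo-null, the characteristic ideals of $X_K$ and of $Y$ coincide, so $f$ is a generator of the characteristic ideal of $X_K$.

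Next I would invoke Lemma \ref{lem2}: $f$ and $\sigma-1$ are relatively prime in $\Lambda_G$.

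Finally, let $y \in Y$ satisfy $(\sigma-1)y = 0$. The annihilator $\mathrm{ann}_{\Lambda_G}(\Lambda_G y)$ then contains both $\sigma - 1$ and $f$, which are relatively prime. By Lemma \ref{PN}, this annihilator is not contained in any height $1$ prime ideal of $\Lambda_G$, so the cyclic module $\Lambda_G y$ is pseudo-null. Since $Y$ has no nonzero pseudo-null submodule, we conclude $y = 0$, whence $Y^{\overline{\langle \sigma \rangle}} = 0$.

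The only mildly delicate points are the appeal to the structure theorem to get an injection $Y \hookrightarrow E$ (which relies on $Y$ having no pseudo-null submodule) and the identification of $\mathrm{char}(X_K)$ with $\mathrm{char}(Y)$ (which uses that pseudo-null modules have trivial characteristic ideal); both are standard in the $\Lambda_G$ setting. Once these are in hand, Lemmas \ref{PN} and \ref{lem2} deliver the argument immediately.
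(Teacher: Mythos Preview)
Your proof is correct and follows essentially the same approach as the paper's: both rest on Lemma~\ref{lem2} (coprimality of a generator of the characteristic ideal with $\sigma-1$) together with the fact that $X_K/X_K^0$ has no nonzero pseudo-null submodule. The paper compresses this into two sentences, while you have spelled out the step the paper leaves implicit, namely that the characteristic element actually annihilates $Y$ via the embedding $Y\hookrightarrow E$, so that any $(\sigma-1)$-fixed element generates a pseudo-null cyclic submodule.
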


\begin{proof}
By lemma \ref{lem2}, 
a generator of the characteristic ideal of $X_K$ and $\sigma-1$ are relatively prime. 
Since $X_K/X_K^0$ has no non-trivial pseudo-null submodules, 
we have $(X_K/X_K^0)^{\overline{\langle \sigma \rangle}}=0$. 
\end{proof}

\begin{lem}\label{lem0}
Let $\Gamma\simeq \Z_p$ 
and $M$ a finitely generated torsion $\Lambda_{\Gamma}$-module. 
Then 
M has no non-trivial finite submodules if and only if 
there is an exact sequence $0\to \Lambda_{\Gamma}^{\oplus r}\to \Lambda_{\Gamma}^{\oplus r} 
\to M \to 0$ for some $r\in \Z_{>0}$. 
\end{lem}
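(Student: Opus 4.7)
My plan is to prove both directions using the Auslander--Buchsbaum formula for the ring $\Lm_{\Gm} \cong \cgr{T}$, which is a regular local Noetherian ring of Krull dimension $2$ with maximal ideal $\f{m} = (p, T)$; in particular, every finitely generated $\Lm_{\Gm}$-module has finite projective dimension at most $2$. The bridge between the two sides of the equivalence is the standard fact that, for a finitely generated $\Lm_{\Gm}$-module $M$, the absence of non-trivial finite submodules is equivalent to $\mathrm{depth}_{\f{m}}(M) \geq 1$. Indeed, a non-zero finite $\Lm_{\Gm}$-submodule is $\f{m}$-primary and so contains a copy of $\Lm_{\Gm}/\f{m}$, producing an element of $M$ annihilated by $\f{m}$ (witnessing $\mathrm{depth}(M)=0$), while conversely any element annihilated by $\f{m}$ spans a non-trivial finite submodule.

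For the implication ($\Rightarrow$), assuming $M$ has no non-trivial finite submodule, the trivial case $M = 0$ is handled by taking $r=1$ together with the identity map. Otherwise $\mathrm{depth}(M) \geq 1$, and the Auslander--Buchsbaum formula yields $\mathrm{pd}_{\Lm_{\Gm}}(M) = 2 - \mathrm{depth}(M) \leq 1$. Choosing any surjection $\Lm_{\Gm}^{\oplus r} \to M$, its kernel $P$ is therefore projective, hence free (as $\Lm_{\Gm}$ is local), say $P \cong \Lm_{\Gm}^{\oplus s}$. To see $s = r$, tensor the resolution $0 \to \Lm_{\Gm}^{\oplus s} \to \Lm_{\Gm}^{\oplus r} \to M \to 0$ with the fraction field of $\Lm_{\Gm}$: since $M$ is torsion the rightmost term vanishes, forcing the rank equality.

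For the implication ($\Leftarrow$), the existence of such a resolution forces $\mathrm{pd}(M) \leq 1$, so Auslander--Buchsbaum gives $\mathrm{depth}(M) \geq 1$, and the preliminary equivalence then concludes the argument. There is no substantive obstacle here, since the whole proof reduces to the Auslander--Buchsbaum formula combined with the characterization of positive depth in terms of associated primes. The only points requiring minor care are the trivial case $M = 0$ (imposed by the requirement $r > 0$) and the rank-matching $s = r$ in the first direction, which crucially uses the torsion hypothesis on $M$.
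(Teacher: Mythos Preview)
Your argument is correct. In the body of the paper this lemma is not actually proved but merely referred to Proposition~2.1 of \cite{Wingberg1985}; however, the \LaTeX\ source contains, after \texttt{\textbackslash end\{document\}}, a drafted elementary proof, and it is worth comparing the two approaches. That argument takes an arbitrary free presentation $0\to R\to \Lm_{\Gm}^{\oplus r}\to M\to 0$, passes to $\Gm$-coinvariants to obtain the four-term exact sequence $0\to M^{\Gm}\to R_{\Gm}\to \Z_p^{\oplus r}\to M_{\Gm}\to 0$, uses the hypothesis to see that $R_{\Gm}$ is $\Z_p$-free of rank $r$, and then shows via Nakayama's lemma that $R\simeq\Lm_{\Gm}^{\oplus r}$; the converse is read off from the injection $M^{\Gm}\hookrightarrow(\Lm_{\Gm}^{\oplus r})_{\Gm}\simeq\Z_p^{\oplus r}$. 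Your route through the Auslander--Buchsbaum formula is more conceptual: it packages the whole statement as ``no finite submodule $\Leftrightarrow$ $\mathrm{depth}_{\f m}(M)\ge 1$ $\Leftrightarrow$ $\mathrm{pd}(M)\le 1$'', and it generalises verbatim to any two-dimensional regular local ring. The drafted argument, by contrast, stays at the level of Nakayama's lemma and the snake lemma, avoiding homological machinery entirely but being specific to the one-variable Iwasawa algebra.
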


\begin{proof}
See proposition $2.1$ of \cite{Wingberg1985}. 
\end{proof}

\begin{lem}\label{prop0}
Let $G\simeq \Z_p^d$ with $d>0$. 
Let $N$ be a finitely generated torsion $\Lambda_G$-module. 
Assume that $N$ has an annihilator $\Phi\in \Lambda_G$ such that 
$\Phi\not\equiv 0\bmod{p\Lambda_G}$. 
Then $G$ contains at least one subgroup $H$ such that $G/H\simeq \Z_p$ 
with the property that $N$ is finitely generated over $\Lambda_H$. 
\end{lem}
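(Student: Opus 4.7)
The plan is to realize $H$ as the kernel of a carefully chosen surjection $\pi: G \twoheadrightarrow \Z_p$ and conclude via the Weierstrass preparation theorem. Fix a $\Z_p$-basis $\sigma_1, \ldots, \sigma_d$ of $G$, identify $\Lambda_G \simeq \Z_p[[T_1, \ldots, T_d]]$ via $T_i = \sigma_i - 1$, and write $\bar\Phi$ for the image of $\Phi$ in $\F_p[[T_1, \ldots, T_d]]$, which is nonzero by hypothesis.

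A subgroup $H \le G$ with $G/H \simeq \Z_p$ corresponds (up to $\Z_p^\times$) to a primitive vector $v = (v_1, \ldots, v_d) \in \Z_p^d \setminus p\Z_p^d$ via $v_i = \pi(\sigma_i)$, where the target $\Z_p = G/H$ is written multiplicatively with topological generator $1 + S$. Given such $v$, pick a basis $\tau_1, \ldots, \tau_d$ of $G$ with $\tau_1, \ldots, \tau_{d-1}$ topologically generating $H$; then $\Lambda_G = \Lambda_H[[U_d]]$ with $U_d = \tau_d - 1$, and $\Lambda_G / \mathfrak{m}_H \Lambda_G$ is naturally identified with $\F_p[[U_d]]$. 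Under this identification, the mod-$p$ reduction of the surjection $\pi_*: \Lambda_G \to \Lambda_{G/H} = \Z_p[[S]]$ becomes the map $\bar\pi_v^*: \F_p[[T_1, \ldots, T_d]] \to \F_p[[S]]$ sending $T_i$ to $(1+S)^{v_i} - 1 \bmod p$. By Weierstrass preparation in the variable $U_d$, the quotient $\Lambda_G/\Phi\Lambda_G$ is finitely generated (in fact free of finite rank) over $\Lambda_H$ exactly when $\bar\pi_v^*(\bar\Phi) \neq 0$. Since $N$ is a finitely generated $\Lambda_G/\Phi\Lambda_G$-module (being annihilated by $\Phi$ and finitely generated over $\Lambda_G$), this finite generation transfers to $N$ over $\Lambda_H$. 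The task thus reduces to producing a primitive $v \in \Z_p^d$ with $\bar\pi_v^*(\bar\Phi) \neq 0$.

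The stronger statement to establish is: \emph{for every nonzero $f \in \F_p[[T_1, \ldots, T_d]]$, there exists a primitive $v \in \Z_p^d$ with $\bar\pi_v^*(f) \neq 0$.} We proceed by induction on $d$. The case $d = 1$ is trivial ($v = 1$). For $d \ge 2$, expand $f = \sum_{k \ge 0} f^{(k)}(T_1, \ldots, T_{d-1}) T_d^k$, let $k_0 = \min\{k : f^{(k)} \neq 0\}$, and apply the inductive hypothesis to $f^{(k_0)}$ to obtain a primitive $v' \in \Z_p^{d-1}$ with $g := \bar\pi_{v'}^*(f^{(k_0)}) \neq 0$ of some order $\ell$. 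Choose $e$ with $p^e > \ell$, and set $v = (v', p^e)$, which is primitive since $v'$ is. Using $(1+S)^{p^e} \equiv 1 + S^{p^e} \pmod p$, one computes
$$
\bar\pi_v^*(f) \;=\; \sum_{k \ge k_0} \bar\pi_{v'}^*(f^{(k)}) \cdot S^{k p^e},
$$
in which the $k = k_0$ summand has order exactly $k_0 p^e + \ell$, while every $k > k_0$ summand has order at least $k p^e \ge (k_0 + 1) p^e > k_0 p^e + \ell$. Hence $\bar\pi_v^*(f)$ has order exactly $k_0 p^e + \ell$ and is nonzero, completing the induction. Applying this to $f = \bar\Phi$ and taking $H = \ker(\pi_v)$ finishes the argument.

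The main obstacle is the inductive step: because the residue field $\F_p$ is finite, the initial form of $\bar\Phi$ may vanish identically at every $\F_p$-rational point of $\F_p^d$, so a purely linear change of variables is inadequate. The crucial device is the large $p$-power coordinate $v_d = p^e$, which produces a Frobenius-style substitution $T_d \mapsto S^{p^e}$ modulo $p$ and thereby stratifies the various $T_d^k$-contributions by $S$-degree, ruling out cancellation.
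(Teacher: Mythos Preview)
Your proof is correct. The paper itself does not prove this lemma; it simply cites Lemma~2 of Greenberg \cite{Greenberg1978}. (A sketch of Greenberg's argument does appear in the source file after \verb|\end{document}|, so one can still compare.)

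The two arguments differ in how they locate $H$. Greenberg's route is an iterative descent using the UFD property of $\overline{\Lambda_G}=\Lambda_G/p\Lambda_G$: the elements $\sigma-1$ with $\sigma\in G\setminus G^p$ give infinitely many pairwise non-associate primes of $\overline{\Lambda_G}$, so one can pick $\sigma$ with $\sigma-1\nmid\overline{\Phi}$, pass to $\Lambda_{G/\overline{\lr{\sigma}}}$ (where the image of $\Phi$ is still $\not\equiv 0\bmod p$), and repeat $d-1$ times. At the end $\Phi_H\not\equiv 0\bmod p$ in $\Lambda_{G/H}\simeq\Z_p\llbracket S\rrbracket$, so $N_H/pN_H$ is finite and Nakayama gives the conclusion. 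This is short and coordinate-free, and needs only that $\overline{\Lambda_G}$ is a UFD with infinitely many primes of the shape $\sigma-1$.

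Your route instead builds $H=\ker(\pi_v)$ in a single stroke and reduces via Weierstrass preparation to the nonvanishing of $\bar\pi_v^{*}(\overline{\Phi})$ in $\F_p\llbracket S\rrbracket$. The Frobenius substitution $T_d\mapsto S^{p^e}$ is a neat device for defeating the finite-residue-field obstruction you point out, and it makes the choice of $H$ completely explicit. Both approaches ultimately verify the same criterion $\Phi\notin\f{m}_H\Lambda_G$ and finish with Weierstrass/Nakayama; Greenberg's is slicker, yours is more constructive and gives finer control over which $H$ works.
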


\begin{proof}
See lemma $2$ of \cite{Greenberg1978}. 
\end{proof}

\begin{lem}\label{prop1}
Let $d\geq 3$ and $G\simeq \Z_p^d$. 
Let $H$ be a subgroup of $G$ such that $G/H\simeq \Z_p$. 
Let $N$ be a finitely generated pseudo-null $\Lambda_G$-module. 
Suppose that $N$ is finitely generated over $\Lambda_H$. 
Then for all but finitely many subgroups $V$ of $H$ with $H/V\simeq \Z_p^{d-2}$, 
$N_V$ is a pseudo-null $\Lambda_{G/V}$-module. 
\end{lem}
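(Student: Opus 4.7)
The plan is to convert the hypotheses into a non-zero annihilator of $N$ inside $\Lambda_H$, descend it to $\Lambda_{H/V}$ for all but finitely many $V$, and combine it with a Cayley--Hamilton annihilator in the transversal direction to produce two coprime annihilators of $N_V$ in $\Lambda_{G/V}$; Lemma \ref{PN} then finishes. To set up, I split the exact sequence $1\to H\to G\to G/H\to 1$, which is possible since $G/H\simeq\Z_p$ is $\Z_p$-free, by choosing a lift $\tau_1\in G$ of a topological generator of $G/H$. Writing $T_1=\tau_1-1$ identifies $\Lambda_G$ with $\Lambda_H\llbracket T_1\rrbracket$, and this splitting descends modulo any $\Z_p$-subgroup $V\suq H$ to $\Lambda_{G/V}=\Lambda_{H/V}\llbracket T_1\rrbracket$. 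Because $I_V\Lambda_G=(v-1)\Lambda_G$ is generated by an element of $\Lambda_H$ (for $v$ a topological generator of $V$), the coinvariants $N_V=N/I_V N$ are automatically finitely generated over $\Lambda_{H/V}$.

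Next I would show $N$ is $\Lambda_H$-torsion. Let $R=\Lambda_G/\mathrm{ann}_{\Lambda_G}(N)$ and let $J=\mathrm{ann}_{\Lambda_G}(N)\cap\Lambda_H$ be the kernel of the induced map $\Lambda_H\to R$. Commutativity of $\Lambda_G$ makes the faithful action of $R$ on $N$ automatically $\Lambda_H$-linear, so $R\hookrightarrow \mathrm{End}_{\Lambda_H}(N)$. Since $N$ is finitely generated over the Noetherian ring $\Lambda_H$, so is $\mathrm{End}_{\Lambda_H}(N)$, and consequently $R$ is finite over $\Lambda_H/J$. Comparing Krull dimensions gives $\dim R=\dim(\Lambda_H/J)=d-\mathrm{ht}(J)$, whereas pseudo-nullity of $N$ over $\Lambda_G$ gives $\mathrm{ht}(\mathrm{ann}_{\Lambda_G}(N))\geq 2$ and hence $\dim R\leq (d+1)-2=d-1$. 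Therefore $\mathrm{ht}(J)\geq 1$, so we may pick a non-zero $\phi\in J\suq\Lambda_H$.

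Finally I would conclude as follows. The UFD $\Lambda_H$ has only finitely many non-associate prime divisors of $\phi$. For a $\Z_p$-subgroup $V=\overline{\lr{v}}\suq H$ with $H/V\simeq\Z_p^{d-2}$, the element $v$ extends to a $\Z_p$-basis of $H$ (since $H/V$ is $\Z_p$-free), making $v-1$ a variable in a suitable coordinate system on $\Lambda_H$; in particular $(v-1)\Lambda_H$ is a height-$1$ prime that uniquely determines $V$. Hence all but finitely many such $V$ satisfy $(v-1)\nmid\phi$ in $\Lambda_H$, equivalently $\bar\phi\neq 0$ in $\Lambda_{H/V}$. For any such ``good'' $V$, $\bar\phi$ is a non-zero annihilator of $N_V$ in $\Lambda_{H/V}\suq\Lambda_{G/V}$. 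Applying Cayley--Hamilton to the $\Lambda_{H/V}$-linear endomorphism $T_1\colon N_V\to N_V$ on the finitely generated module $N_V$ produces a monic polynomial $P(T_1)\in\Lambda_{H/V}[T_1]$ that also annihilates $N_V$. Reducing $P(T_1)$ modulo any prime $q\in\Lambda_{H/V}$ yields a non-zero monic polynomial in $(\Lambda_{H/V}/q)[T_1]$, so $q\nmid P(T_1)$ in $\Lambda_{G/V}=\Lambda_{H/V}\llbracket T_1\rrbracket$; consequently $P(T_1)$ is coprime to every prime factor of $\bar\phi$, hence to $\bar\phi$ itself. Applying Lemma \ref{PN} to the coprime pair $\bar\phi$ and $P(T_1)$ shows that $\mathrm{ann}_{\Lambda_{G/V}}(N_V)$ is contained in no height-$1$ prime, so $N_V$ is pseudo-null over $\Lambda_{G/V}$.

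The main obstacle is the dimensional argument in the second paragraph, which converts ``pseudo-null over $\Lambda_G$ together with finitely generated over $\Lambda_H$'' into ``torsion over $\Lambda_H$''; once this bridge is in place, the finiteness of the exceptional $V$ (bounded by the number of prime divisors of $\phi$) and the Cayley--Hamilton construction of a second annihilator coprime to $\bar\phi$ are routine.
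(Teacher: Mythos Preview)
Your argument is correct and arrives at the conclusion by a genuinely different route from the paper, the divergence being in how one produces a nonzero annihilator of $N$ lying in the subring $\Lambda_H$.

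The paper applies Cayley--Hamilton \emph{upstairs} to the $\Lambda_H$-endomorphism $T$ of $N$, obtaining a distinguished annihilator $f\in\Lambda_H[T]$; pseudo-nullity together with Lemma~\ref{PN} then supplies a second annihilator $g$ coprime to $f$, which after Weierstrass preparation may also be taken distinguished. A B\'ezout relation in $Q_{\Lambda_H}[T]$ yields a nonzero $\alpha\in\Lambda_H$ with $\alpha\in(f,g)$; the finitely many bad $V$ are those with $\alpha_V=0$, and for the remaining $V$ the reductions $f_V,g_V$ stay coprime because the B\'ezout relation survives. You instead extract $\phi\in\mathrm{ann}_{\Lambda_G}(N)\cap\Lambda_H$ directly by a Krull-dimension comparison (using that $R=\Lambda_G/\mathrm{ann}(N)$ embeds in $\mathrm{End}_{\Lambda_H}(N)$, hence is module-finite over $\Lambda_H/J$), and postpone Cayley--Hamilton to the quotient $N_V$, where it furnishes the second annihilator $P(T_1)$. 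Your pair $(\bar\phi,P(T_1))$ is then visibly coprime in $\Lambda_{G/V}$, since one member is a nonzero constant in $\Lambda_{H/V}$ and the other is monic in $T_1$; by contrast the paper's pair $(f_V,g_V)$ consists of two monic polynomials and needs the surviving relation $\alpha_V\neq 0$ to certify coprimality.

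What each buys: the paper's argument stays entirely within the Weierstrass/resultant toolkit and pins the exceptional set to the prime divisors of a concrete resultant; your approach trades this for a short dimension count, avoids Weierstrass preparation altogether, and makes the final coprimality check automatic. One point you leave implicit is that every prime divisor of $\bar\phi$ in the UFD $\Lambda_{G/V}=\Lambda_{H/V}\llbracket T_1\rrbracket$ is already (associate to) a prime of $\Lambda_{H/V}$; this holds because a prime $q_0\in\Lambda_{H/V}$ stays prime in $\Lambda_{H/V}\llbracket T_1\rrbracket$, so the factorization of $\bar\phi$ is unchanged under the inclusion, and it is what justifies testing $q\nmid P(T_1)$ only for $q\in\Lambda_{H/V}$.
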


\begin{proof}
This lemma is shown in \cite{Minardi} as a Corollary of Proposition 4.C.
Here, 
we give a somewhat simpler proof. 
Let $H$ be a subgroup of $G$ such that $N$ is finitely generated over $\Lambda_H$ 
with $G/H\simeq \Z_p$. 
Let $\tau\in G$ be an element such that $G=H\times \overline{\lr{\tau}}$. 
Put $T=\tau-1$, 
and we shall identify by Serre's isomorphism $\Lambda_G$ with $\Lambda_H\llbracket T\rrbracket$, 
the formal power series ring in the variable $T$ with coefficients in $\Lambda_H$. 
Hence all $\Lambda_G$-module can be regarded as $\Lambda_H\llbracket T\rrbracket$-modules. 
Since $N$ is finitely generated over $\Lambda_H$, 
by Cayley--Hamilton's theorem, 
there is a monic polynomial $f\in \Lambda_H[T]$ such that $f$ annihilates $N$. 
By the Weierstass preparation theorem, 
we may assume that $f$ is a distinguished polynomial of degree greater than $0$, 
see Definition $2$ and Proposition $6$ in Section $3$ of Chapter $7$ of \cite{CommutativeAlgebra}. 
Since $N$ is pseudo-null, 
by lemma \ref{PN}, 
there is an annihilator $g\in \Lambda_G=\Lambda_H\llbracket T \rrbracket$ of $N$ such that $f$ and $g$ are relatively prime. 
If we need, 
by adding $f$ to $g$ and by the Weierstrass preparation theorem, 
we may assume that $g$ is also a distinguished polynomial in $\Lambda_H[T]$. 
By proposition $7$ of Section $3$ of Chapter $7$ of \cite{CommutativeAlgebra}, 
$f$ and $g$ are relatively prime in $\Lambda_G$ if and only if are relatively prime in $\Lambda_H[T]$. 
Hence there are polynomials $A$ and $B$ of $Q_{\Lambda_H}[T]$ such that $Af+Bg=1$, 
here $Q_{\Lambda_H}$ denotes the field of fractions of $\Lambda_H$. 
Choose an element $\alpha\in \Lambda_H$ such that $\alpha A,\alpha B\in \Lambda_H[T]$, 
hence it holds that $\alpha Af+\alpha Bg=\alpha$. 
Let $\sigma\in H-H^p$. 
By the choice of $f$ and $g$, 
we have $f,g\not\equiv0\bmod{(\sigma-1)\Lambda_G}$. 
Since $\sigma-1$ is a prime element and $\Lambda_G$ is a UFD, 
there are infinitely many such $\sigma$ so that $\alpha \not\equiv 0\bmod{(\sigma-1)\Lambda_G}$. 
Let $V=\overline{\lr{\sigma}}$. 
For each $h\in \Lambda_G$, 
let $h_V$ be the image of $h$ with respect to the map 
$\Lambda_G\to \Lambda_{G/V}=\Lambda_{H/V}\llbracket T\rrbracket$. 
Thus it holds that $(\alpha A)_Vf_V+(\alpha B)_Vg_V=\alpha_V\neq 0$. 
This implies that $f_V$ and $g_V$ are relatively prime in $\Lambda_{H/V}[T]$. 
Further $f_V,g_V\neq 0$ and both of $f_V,g_V$ annihilate $N_V$. 
Therefore, $N_V$ is a pseudo-null $\Lambda_{H/V}\llbracket T \rrbracket=\Lambda_{G/V}$-module. 
\end{proof}

To prove our theorem, 
we need to cite the following result. 

\begin{lem}[Essentially Theorem 1 of Ozaki \cite{Ozaki2001}]\label{prop2}
Let $U\simeq \Z_p^2$. 
Let $\ca{F}$ be an infinite set of subgroups $V$ of $U$ with the property that $U/V\simeq \Z_p$. 
For each $V\in \ca{F}$, 
choose a topological generator $\gamma_{V}\in U/V$. 
Let $N$ be a pseudo-null $\Lambda_U$-module. 
Suppose that a generator of the characteristic ideal of $N_V$ over 
$\Lambda_{U/V}$ and $\gamma_{V}^{p^n}-1$ are relatively prime 
for all $V\in \ca{F}$ and $n\geq 0$. 
Then $N_V$ is finite for all but finite $V\in \ca{F}$. 
\end{lem}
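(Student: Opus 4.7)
The plan is to associate to each ``bad'' $V \in \ca{F}$ (those for which $N_V$ is infinite) a height-$2$ minimal prime of the support of $N$ in $\mathrm{Spec}(\Lambda_U)$, and then to show that each such minimal prime is hit by at most one $V$. Since $N$ is finitely generated pseudo-null and $\Lambda_U$ has Krull dimension $3$, the support of $N$ has only finitely many minimal primes, each of height at least $2$; if every minimal prime has height $3$ (i.e.\ is the maximal ideal), then $N$ itself is finite, so is each $N_V$, and there is nothing to prove.

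First I would check that for every $V$, the module $N_V$ is finitely generated and torsion over $\Lambda_{U/V} \simeq \cgr{T_V}$ (writing $T_V = \gamma_V - 1$): since $\mathrm{Ann}(N)$ has height at least $2$ in $\Lambda_U$, lemma \ref{PN} forces $\mathrm{Ann}(N) \not\subseteq (\sigma_V - 1)$ for any generator $\sigma_V$ of the rank-$1$ subgroup $V$, so the image of $\mathrm{Ann}(N)$ in $\Lambda_{U/V}$ is nonzero and annihilates $N_V$. Then $N_V$ is infinite if and only if a generator of $\mathrm{char}(N_V)$ admits a nontrivial irreducible factor $q_V \in \cgr{T_V}$.

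For such a bad $V$, pick any such $q_V$ and any lift $\tilde q_V \in \Lambda_U$. The ideal $\mathfrak q_V := (\sigma_V - 1, \tilde q_V)$ is prime of height $2$ in $\Lambda_U$ (its quotient by the prime $(\sigma_V - 1)$ is the height-$1$ prime $(q_V)$ in $\cgr{T_V}$), and $\mathfrak q_V \supseteq \mathrm{Ann}(N)$ since $(q_V) \supseteq \mathrm{Ann}(N_V) \supseteq \mathrm{Ann}(N)\Lambda_{U/V}$. Thus $\mathfrak q_V$ contains, and by comparing heights equals, some height-$2$ minimal prime $\mathfrak p$ of the support of $N$. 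Now fix such a $\mathfrak p$ and form the closed subgroup $U_{\mathfrak p} := \{u \in U \mid u - 1 \in \mathfrak p\}$ of $U$. Every $V$ assigned to $\mathfrak p$ satisfies $\sigma_V \in U_{\mathfrak p}$. If $U_{\mathfrak p} = U$, then the augmentation ideal $(\gamma_1 - 1, \gamma_2 - 1)$ sits in $\mathfrak p$ and, by height, equals $\mathfrak p$; its image in $\cgr{T_V}$ is $(T_V)$, forcing $q_V \sim T_V = \gamma_V - 1$, which contradicts the hypothesis at $n = 0$. Otherwise $U_{\mathfrak p}$ has $\Z_p$-rank at most $1$, and the only rank-$1$ $\Z_p$-direct summand $V$ of $U \simeq \Z_p^2$ that can be contained in $U_{\mathfrak p}$ is $V = U_{\mathfrak p}$ itself (and only when $U_{\mathfrak p}$ is already a direct summand), giving at most one $V$ per $\mathfrak p$. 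Summing over the finite set of height-$2$ minimal primes yields the claim.

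The main obstacle is the middle step: verifying that $(\sigma_V - 1, \tilde q_V)$ is prime of height exactly $2$ and identifying it with a minimal prime of the support of $N$, which requires combining the primality of $\sigma_V - 1$ in $\Lambda_U$, the primality of $q_V$ in the quotient, and the fact that pseudo-nullness bounds the heights of minimal primes from below by $2$. The final counting of admissible $V$ inside $U_{\mathfrak p}$ is then routine from the elementary divisor structure of closed subgroups of $\Z_p^2$.
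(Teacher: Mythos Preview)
The paper does not prove this lemma; it merely cites Ozaki \cite{Ozaki2001} and remarks that the statement refines a lemma of Minardi. So there is no ``paper's proof'' to compare against, and your argument must stand on its own.

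Your approach via the height-$2$ minimal primes of $\mathrm{Supp}(N)$ is sound and pleasant, and the steps identifying $\mathfrak q_V=(\sigma_V-1,\tilde q_V)$ as a height-$2$ prime equal to some minimal prime $\mathfrak p$ of the support are correct. The final counting step (at most one rank-$1$ direct summand of $U$ inside a rank-$\le 1$ closed subgroup) is also fine.

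There is, however, a genuine gap in the sentence ``Otherwise $U_{\mathfrak p}$ has $\Z_p$-rank at most $1$.'' This is false as stated: a height-$2$ prime $\mathfrak p$ of $\Lambda_U$ can have $U_{\mathfrak p}$ of rank $2$ and proper in $U$. For instance, with $U=\overline{\lr{\gamma_1,\gamma_2}}$ take $\mathfrak p=\bigl(\Phi_p(\gamma_1),\,\gamma_2-1\bigr)$, where $\Phi_p$ is the $p$-th cyclotomic polynomial; then $\Lambda_U/\mathfrak p\simeq\Z_p[\zeta_p]$ and one computes $U_{\mathfrak p}=\overline{\lr{\gamma_1^{p},\gamma_2}}$, which has index $p$ in $U$. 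Infinitely many rank-$1$ direct summands $V$ (e.g.\ $V=\overline{\lr{\gamma_1^{pm}\gamma_2}}$ for $m\in\Z_p$) sit inside this $U_{\mathfrak p}$. Your use of the hypothesis only at $n=0$ rules out $U_{\mathfrak p}=U$ but not this case.

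The fix is to use the hypothesis at \emph{all} $n\ge 0$. Unwinding, coprimality of the irreducible $q_V$ with $\gamma_V^{p^n}-1$ in $\Lambda_{U/V}$ says exactly that $\gamma_V^{p^n}-1\notin (q_V)=\mathfrak p/(\sigma_V-1)$, i.e.\ $\tilde\gamma_V^{p^n}-1\notin\mathfrak p$, i.e.\ $\tilde\gamma_V^{p^n}\notin U_{\mathfrak p}$ for every $n$. Since $\{\sigma_V,\tilde\gamma_V\}$ is a basis of $U$ and $\sigma_V\in U_{\mathfrak p}$, if $U_{\mathfrak p}$ had finite index then some $\tilde\gamma_V^{p^n}$ would lie in $U_{\mathfrak p}$, a contradiction. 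Hence $U_{\mathfrak p}$ has infinite index in $U\simeq\Z_p^2$, so $\mathrm{rank}_{\Z_p}U_{\mathfrak p}\le 1$, and the rest of your argument goes through. With this correction the proof is complete.
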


\begin{rem}
Lemma \ref{prop2} can be seen as a refinement of lemma 4.2 of \cite{Minardi}. 
\end{rem}

\section{Proof of theorem \ref{main1}}

Let $K/k$ be a $\Z_p^d$-extension and suppose that the condition A holds. 
Suppose also that $A_k\simeq \Z/p^c$ for some $c>0$, 
and ${\rm Ker}(A_{k'}\to A_K)\neq 0$ for some $k\subseteq k' \subseteq K$ with $[k':k]<\infty$. 
There is a finite extension $k'_1/k'$ with $k'_1\subseteq K$ such that 
${\rm Ker}(A_{k'}\to A_{k'_1})\neq 0$. 
Then one can find a finite cyclic extension $F'/F$ such that $k'\subseteq F \subseteq F' \subseteq 
k'_1$ and that ${\rm Ker}(A_F\to A_{F'})\neq 0$. 
Since $K/F$ be a $\Z_p^d$-extension, 
there is a $\Z_p$-extension $F_{\infty}/F$ such that $F'\subseteq F_{\infty}\subseteq K$ and that 
${\rm Ker}(A_F\to A_{F_{\infty}})\neq 0$. 
By theorem \ref{Ozaki}, 
we have $X_{F_{\infty}}^0\neq 0$. 
Let $G=\ga(K/k)$, 
$H=\ga(K/F_{\infty})$ and $\Gamma=\ga(F_{\infty}/F)$. 
By Nakayama's lemma and lemma \ref{lem1}, 
since $A_k\simeq (X_K)_G$, 
$X_K$ is cyclic over $\Lambda_G$. 
Let $0\to I \to \Lambda_G \to X_K \to 0$ be an exact sequence of $\Lambda_G$-modules 
with an ideal $I$ of $\Lambda_G$. 
Since $\Lambda_G$ is noetherian, 
$I$ is finitely generated. 
Put $I=(h_1,\cdots,h_s)$ for some elements $h_1,\cdots,h_s\in \Lambda_G$. 
Let $h$ be a greatest common divisor of $h_1,\cdots,h_s$. 
Suppose that $X_K^0=0$. 
Let $I_0=(h_1/h,\cdots, h_s/h)$.  
It holds that $h\Lambda_G/I\simeq \Lambda_G /I_0$. 
Since elements $h_1/h,\cdots,h_s/h$ have no non-trivial common divisor, 
$I_0$ is not contained in any height $1$ prime ideals of $\Lambda_G$. 
Hence $\Lambda_G/I_0\simeq h\Lambda_G/I$ is a pseudo-null $\Lambda_G$-module. 
Since $X_K\simeq \Lambda_G/I$ has no non-trivial pseudo-null submodules, 
we have $I=h\Lambda_G$. 
Thus there is an exact sequence $0\to \Lambda_G\to \Lambda_G \to X_K\to 0$. 
Since $(\Lambda_G)_H\simeq \Lambda_{G/H}$, 
we have an exact sequence
$
\Lambda_{G/H}\to \Lambda_{G/H} \to X_{F_{\infty}} \to 0
$. 
By the definitions of $G$ and $H$, 
we have $(G/H)/\Gamma =\ga(F/k)$. 
Since $(\Lambda_{G/H})_{\Gamma}\simeq \Z_p^{\oplus [F:k]}$ and 
$\Lambda_{G/H}$ is torsion free over $\Z_p$, 
it holds that $\Lambda_{G/H}\simeq \Lambda_{\Gamma}^{\oplus [F:k]}$ as $\Lambda_{\Gamma}$-modules. 
From the fact that $X_{F_{\infty}}$ is a torsion 
$\Lambda_{\Gamma}$-module, 
the kernel of $\Lambda_{G/H}\to \Lambda_{G/H}$ is a submodule 
of a free $\Lambda_{\Gamma}$-module and is of rank $0$, 
and hence is trivial. 
Therefore we have an exact sequence 
$
0\to \Lambda_{\Gamma}^{\oplus [F:k]}\to \Lambda_{\Gamma}^{\oplus [F:k]} 
\to X_{F_{\infty}} \to 0
$. 
This implies that $X_{F_{\infty}}^0=0$ by lemma \ref{lem0}. 
This contradicts to the fact that $X_{F_{\infty}}^0\neq 0$. 
Thus we have $X_K^0\neq 0$. 
\qed

\section{Proof of theorem \ref{main2}}

Let $K/k$ be a $\Z_p^d$-extension. 
Suppose that the condition A holds, 
and that $X_K^0\neq 0$. 
Let $G=\ga(K/k)$. 
Since $X_K^0$ is pseudo-null, 
there is an annihilator $\Phi\in \Lambda_G$ of $X_K^0$ such that 
$\Phi\not\equiv 0\bmod{p\Lambda_G}$. 
By lemma \ref{prop0}, 
there is a subgroup $H$ of $G$ such that $G/H\simeq \Z_p$ and that $X_K^0$ 
is finitely generated over $\Lambda_H$. 
By Nakayama's lemma and lemma \ref{prop1}, 
there is $\sigma\in H-H^p$ such that 
$X_K^0/(\sigma-1)X_K^0$ is a non-trivial pseudo-null 
$\Lambda_{G/\overline{\langle \sigma \rangle}}$-module. 
Let $K^{\overline{\lr{\sigma}}}$ be the fixed field of $\sigma$ in $K$. 
By lemma \ref{lem3}, 
$X_K^0/(\sigma-1)X_K^0\to X_K/(\sigma-1)X_K\simeq X_{K^{\overline{\lr{\sigma}}}}$ is injective. 
Hence we have $X_{K^{\overline{\lr{\sigma}}}}^0\neq 0$. 
Let $K^H$ be the fixed field of $H$. 
Then one sees that $K^H/k$ is a $\Z_p$-extension. 
By doing the same arguments, 
we can find a $\Z_p^2$-extension $L/k$ such that $X_L^0\neq 0$ and $K^H\subseteq L$. 
Put $U=\ga(L/k)$ and $\ca{F}=\{V=\overline{\lr{\tau}}\mid \tau \in U- U^p\}$. 
Let $V\in \ca{F}$,  
and $k_{\infty}\subseteq L$ the fixed field of $V$. 
Let $\gamma_V$ be a topological generator of $\ga(k_{\infty}/k)=U/V$. 
From the condition A, 
it holds that $A_{k_n}\simeq X_{k_{\infty}}/(\gamma_V^{p^n}-1)X_{k_{\infty}}$, 
and hence $X_{k_{\infty}}/(\gamma_V^{p^n}-1)X_{k_{\infty}}$ is finite. 
This shows that a generator of the characteristic ideal of $X_{k_{\infty}}$ over 
$\Lambda_{U/V}$ and $\gamma_V^{p^n}-1$ are relatively prime for all $n\geq 0$. 
By lemma \ref{lem3}, 
the map $(X_L^0)_V\to (X_L)_V\simeq X_{k_{\infty}}$ is injective. 
Hence a generator of the characteristic ideal of $(X_L^0)_V$ and $\gamma_V^{p^n}-1$ are also relatively prime for all $n\geq 0$. 
By lemma \ref{prop2}, 
there is a subgroup $V\in \ca{F}$ such that $(X_L^0)_V$ is non-trivial and finite. 
Therefore, 
there is a $\Z_p$-extension $k_{\infty}/k$ such that $X_{k_{\infty}}^0\neq 0$. 
By theorem \ref{Ozaki}, 
${\rm Ker}(A_{k_n}\to A_{k_{\infty}})\neq 0$ for some $n\geq 0$. 
Since ${\rm Ker}(A_{k_n}\to A_{k_{\infty}})\subseteq {\rm Ker}(A_{k_n}\to A_K)$, 
this completes the proof. 
\qed

\section*{Acknowledgments}
The research of this article was partly supported by JSPS KAKENHI Grant number 
22H01119.

\end{document}